\definecolor{ocean}{rgb}{0,0.5,0.5}
\definecolor{blue}{rgb}{0.00,0.26,0.50}
\newtheorem{theorem}{Theorem}[section]
\newtheorem{proposition}[theorem]{Proposition}
\newtheorem{definition}[theorem]{Definition}
\newtheorem{remark}[theorem]{Remark}
\newtheorem{example}[theorem]{Example}
\newtheorem{corollary}[theorem]{Corollary}
\newtheorem{lemma}[theorem]{Lemma}
\def\<{\,<\!}
\def\>{\!>\,}
\def\F{\mathbb{F}}
\begin{document}
	
\title[Rational and real elements]{On rational and real elements in a class of \\Lie groups}
	
\author{Arunava Mandal and Shashank Vikram Singh}

\address{Department of Mathematics,
Indian Institute of Technology Roorkee,
Uttarakhand 247667, India}
\email{arunava@ma.iitr.ac.in}

\address{Department of Mathematical Sciences,
Indian Institute of Science Education and Research Mohali,
Punjab 140306, India}
\email{shashank@iisermohali.ac.in}

	\begin{abstract}
   For a class of groups $G$ over a field $\mathbb{F}$, including certain Lie groups, Algebraic groups and finite groups, we develop a general method to determine rational and real elements, thereby unifying earlier group-specific results into a wider framework. As an application, we classify all real and rational elements in the semidirect product ${\rm SL}(2,\mathbb{R}) \ltimes \mathrm{Sym}^n(\mathbb{R}^2)$. Furthermore, for affine groups of the form ${\rm GL}(n,\mathbb{R}) \ltimes \mathbb{R}^n$, we show that if $x \in {\rm GL}(n,\mathbb{R})$ is rational, then $(x,v)$ is rational for every $v \in \mathbb{R}^n$.

	\end{abstract}
\maketitle
    \subjclass{\it 2020 Mathematics Subject Classification:} Primary 20H20, 20E45; Secondary 20D10, 20G20 

\keywords{\bf Keywords:} Solvable Lie groups, Classical Algebraic groups, Affine groups, Real and rational elements.

\setcounter{tocdepth}{1}
\tableofcontents

\section{Introduction}
Let $G$ be a group.  
An element $g \in G$ is called \emph{real} if it is conjugate to its inverse in $G$, and \emph{rational} if it is conjugate to all powers $g^k$, where $\gcd(k,{\rm Ord}(g))=1$ whenever $g$ has finite order.  
These notions are naturally expressed in terms of conjugacy classes and play a key role in representation theory, especially through character theory.  
For a finite group $G$, an element $g$ is real precisely when $\chi(g) \in \mathbb{R}$ for every $\chi \in {\rm Irr}(G)$, the set of irreducible complex characters of $G$.  
Similarly, $g$ is rational if and only if $\chi(g) \in \mathbb{Q}$ for all $\chi \in {\rm Irr}(G)$.  
The correspondence between real conjugacy classes and real irreducible characters is well behaved and bijective (see for e.g. \cite{TZ05}), whereas the situation for rational conjugacy classes and rational-valued irreducible characters is far more subtle \cite{NT08}.  
This motivates the systematic study of real and rational elements across various families of groups, both finite and Lie groups (see \cite{OS15}, \cite{GM23}).

In this article, we develop criteria for detecting real and rational elements in groups of the form
$G=H\ltimes N,$ where $H$ acts by automorphisms on a nilpotent normal subgroup $N$ of $G$.  
In particular, we show how information about real (resp.\ rational) elements in $H$ can often be lifted to obtain corresponding real (resp.\ rational) elements in $G$.  
This provides a unified framework that extends several earlier results obtained for special families of groups.
We illustrate our approach in particular for the groups ${\rm SL}(2,\mathbb{R}) \ltimes \mathbb{R}^n$ and affine groups ${\rm GL}(n,\mathbb{R}) \ltimes \mathbb{R}^n$.

There has been considerable work on real elements in various groups, see~\cite{OS15} and the references therein.  
For instance, real elements in algebraic groups and finite groups of Lie type have been studied in~\cite{TZ05,ST08}, primarily in the context of semisimple groups; much of the subsequent literature has continued along this direction.  
For affine groups, which possess an abelian nilradical, real elements have been investigated in~\cite{GLM23}.  
Recall that a subgroup $N$ of a Lie group $G$ is called the \emph{nilradical} if it is the maximal connected nilpotent normal subgroup of $G$.  
 In contrast to the semisimple case, much less is known about \emph{mixed type} Lie groups, i.e.\ those with both a non-trivial radical, the maximal connected nilpotent normal subgroup of $G$, and a semisimple part. A Lie group is called \emph{semisimple} if its radical is trivial.  
In this work, we focus on precisely such groups. We now recall some terminology and definitions needed for Theorem~\ref{theorem-F-nilpotent}.

\begin{definition}\label{def-F-nil}\cite{DM17}
Let $\mathbb{F}$ be a field. A \emph{nilpotent group} $N$ is said to be \emph{$\mathbb{F}$-nilpotent} if it satisfies the following two properties:
\begin{enumerate}
\item 
$N = N_0 \triangleright N_1 \triangleright \cdots \triangleright N_r = \{e\}$
 is the central series of $N$,

\item each quotient $N_j / N_{j+1}$ is a finite-dimensional vector space over $\mathbb{F}$.
\end{enumerate}
\end{definition}

In the following definition, let $N$ be an $\mathbb{F}$-nilpotent group with the central series
\[
N = N_0 \triangleright N_1 \triangleright \cdots \triangleright N_r = \{1\}.
\]
\begin{definition}\label{def-linear-action}\cite{DM17}
Suppose a group $G$ acts on $N$ by automorphisms. We say the action is \emph{$\mathbb{F}$-linear} if, for each index $j$, the induced action of $G$ on the quotient $N_j / N_{j+1}$ is $\mathbb{F}$-linear. Furthermore, for any element $x \in G$, we say that the action of $x$ on $N$ has no nonzero fixed points if, for each $j$, the action of $x$ on $N_j / N_{j+1}$ fixes only the zero element.
\end{definition}

It is worth noting that if $G$ is a group that acts by automorphisms on an $\mathbb{F}$-nilpotent normal subgroup $N$, then the action of $G$ on each $N_j / N_{j+1}$ factors through the quotient group $G/N$.

\begin{theorem}\label{theorem-F-nilpotent}
Let $\mathbb{F}$ be a field and let $N$ be a nilpotent normal $\mathbb{F}$-subgroup of $G$ such that $G = H \ltimes N$ for some subgroup $H \leq G$.  
Suppose the conjugation action of $G$ on $N$ is $\mathbb{F}$-linear.  
Let $x \in H$ be a non-identity element, and assume that the action of $x$ on $N$ has no nonzero fixed point. Then:
\begin{enumerate}
    \item if $x$ is real in $H$, then $xn$ is real in $G$ for all $n \in N$,
    \item if $x$ is rational in $H$, then $xn$ is rational in $G$ for all $n \in N$.
\end{enumerate}
\end{theorem}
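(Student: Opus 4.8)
The plan is to translate the two conjugacy conditions in $G = H \ltimes N$ into equations that live inside $N$, and then to solve those equations by climbing the central series one layer at a time. Throughout I write elements of $G$ as products $hn$ with $h\in H$, $n\in N$, and abbreviate the conjugation action by $\alpha_g(n)=gng^{-1}$; by hypothesis each $\alpha_g$ restricts to an $\mathbb{F}$-linear automorphism of every quotient $V_j=N_j/N_{j+1}$, and the phrase ``no nonzero fixed point'' says precisely that $\alpha_x-I$ is injective, hence invertible by finite-dimensionality, on each $V_j$.

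For realness, I would start from $h\in H$ with $hxh^{-1}=x^{-1}$ and look for a correction $m\in N$ so that $g=mh$ satisfies $g(xn)g^{-1}=(xn)^{-1}$. A direct computation gives $h(xn)h^{-1}=x^{-1}\alpha_h(n)$ and $(xn)^{-1}=x^{-1}\alpha_x(n)^{-1}$, so the two already agree in their $H$-component $x^{-1}$, and it remains to solve the $N$-equation $\alpha_x(m)\,\alpha_h(n)\,m^{-1}=\alpha_x(n)^{-1}$ for $m$. For rationality one proceeds identically: expanding $(xn)^k=x^k\,\alpha_x^{-(k-1)}(n)\cdots\alpha_x^{-1}(n)\,n$ and using $h_k x h_k^{-1}=x^k$ reduces matters to an analogous $N$-equation governed by the operator $\alpha_x^{-k}$ instead of $\alpha_x$.

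To solve such an equation I would induct on the length $r$ of the central series. When $N$ is abelian it is an $\mathbb{F}$-vector space and the equation becomes linear, e.g.\ $(\alpha_x-I)(m)=-\alpha_x(n)-\alpha_h(n)$ in the real case, which is solvable because $\alpha_x-I$ is invertible. In the rational case the governing operator is $\alpha_x^{-k}-I$, and its invertibility is the one genuinely new point: since $x$ is rational, $x^k$ is conjugate to $x$ in $H$, so on each layer $\alpha_x^k=\alpha_{h_k}\alpha_x\alpha_{h_k}^{-1}$ has the same spectrum as $\alpha_x$, which excludes $1$; hence $1$ is an eigenvalue of neither $\alpha_x^k$ nor $\alpha_x^{-k}$, and $\alpha_x^{-k}-I$ is invertible. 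For the inductive step I set $Z=N_{r-1}$, a central $G$-invariant subgroup, pass to $G/Z=H\ltimes(N/Z)$ (which inherits all hypotheses with a shorter series), solve there by induction, lift the conjugator to $G$, and observe that the lift sends $xn$ to the desired target up to an error $z\in Z$. Conjugating once more by a suitable $w\in Z$ changes this error by $(\alpha_x-I)(w)$ (resp.\ $(\alpha_x^{-k}-I)(w)$) in the vector space $Z=V_{r-1}$, so invertibility of that operator lets me cancel $z$ exactly.

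The main obstacle is this inductive lifting step. One must check that the hypotheses ($\mathbb{F}$-nilpotency, $\mathbb{F}$-linearity, absence of nonzero fixed points) descend to $G/Z$ and restrict to $Z$, and, more delicately, that conjugation by a central $w$ perturbs only the $Z$-component and does so by exactly the linear expression above, so that the residual equation is genuinely linear and solvable. Verifying that the central element commutes past the various factors in $(xn)^{-1}$ (resp.\ $(xn)^k$) without introducing deeper corrections is where the nilpotent, rather than abelian, structure must be handled with care; once that bookkeeping is complete, the invertibility supplied by the fixed-point hypothesis (and, for rationality, by the spectral argument) closes the induction.
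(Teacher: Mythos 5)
Your argument is correct, but it is organized differently from the paper's proof. The paper never writes down the equation for a conjugator sending $xn$ to $(xn)^{-1}$ or $(xn)^k$; instead it iterates part (1) of Proposition~\ref{vector group} (for each $v$ there is a unique $w$ with $xv=wxw^{-1}$) down the central series to show that $xn$ is itself conjugate, by an element of $N$, to $x$: at each stage $x_{j-1}n_{j-1}=x_jn_j$ with $x_j=w_{j-1}x_{j-1}w_{j-1}^{-1}$ and $n_j$ one step deeper, so eventually $xn=x_r$ is a conjugate of $x$. Realness and rationality of $xn$ then follow at once, since both properties are conjugation-invariant and, crucially for the rational case, ${\rm Ord}(xn)={\rm Ord}(x)$ comes for free. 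Your route instead fixes the target power $(xn)^{\pm k}$, corrects the conjugator $h$ (resp.\ $h_k$) from $H$ by an element $m\in N$ found layer by layer, and therefore must separately establish that $\alpha_x^{k}-I$ is invertible on each $V_j$; your similarity argument $\alpha_x^{k}-I=\alpha_{h_k}(\alpha_x-I)\alpha_{h_k}^{-1}$ does this correctly and works over $\mathbb{F}$ without any appeal to eigenvalues over a closure. The lifting bookkeeping you flag does close: since $Z=N_{r-1}\subseteq Z(N)$, a central $w$ commutes with the $N$-part of $(xn)^k$, so conjugation by $w$ perturbs the residual error by exactly $(\alpha_x^{-k}-I)(w)$, which can be cancelled. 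One point you should make explicit: you need every $k$ coprime to ${\rm Ord}(xn)$ to be coprime to ${\rm Ord}(x)$ so that $h_k$ exists; this holds because $(xn)^j=x^j n'$ with $n'\in N$ and $H\cap N=\{e\}$ forces ${\rm Ord}(x)\mid{\rm Ord}(xn)$, whereas in the paper's version it is automatic since $xn$ and $x$ are conjugate. In short, the paper's absorption trick is the more economical factorization --- the rational case costs nothing beyond the real one --- while your version produces the conjugating element for each power explicitly at the price of the extra invertibility lemma.
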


Note that a large class of Lie groups falls within the framework of Theorem~\ref{theorem-F-nilpotent}.  
For instance, if $G = (SA)\ltimes N$, where $S$ is semisimple, $A$ is abelian, and $N$ is simply connected and nilpotent Lie group, then $G$ satisfies the hypotheses of the theorem. We also give an example, in subsection \ref{example-remark}, showing that the hypothesis of Theorem~\ref{theorem-F-nilpotent} can be satisfied not only when the nilradical is abelian, but also when it is a higher-step nilpotent group. 
It is important to emphasize, however, that the condition in Theorem~\ref{theorem-F-nilpotent} is sufficient but not necessary; a counterexample can be found among connected solvable Lie groups (see Remark \ref{remark-sufficient}).  
We now restrict our attention to connected solvable Lie groups in order to obtain sharper results. Recall that an element $g \in G$ is called \emph{strongly real} if it can be expressed as a product of two involutions in $G$; equivalently, if there exists an involution $h \in G$ such that $g^{-1} = h g h^{-1}.$ For most classical groups over $\mathbb{R}$ or $\mathbb{C}$, it is known that a semisimple element is real if and only if it is strongly real~\cite{OS15}. The situation for unipotent elements—whether they are real or strongly real—has been addressed in~\cite{GM23}. In contrast, much less is known for mixed type of elements, i.e., it consists of both semisimple and unipotent part.  
In this work, we study the real and strongly real elements in a class of solvable Lie groups.

\begin{theorem}\label{a class of solvable groups}
Let $A$ be an abelian subgroup and $N$ be a simply connected nilpotent subgroup of $G$ such that 
$G = A \ltimes N$. Let $N= N_0 \triangleright N_1 \triangleright \cdots \triangleright N_r =\{e\},$
be the central series of $N$. Then the following hold.
\begin{enumerate}
    \item If $x \in A$ is conjugate to $x^{-1}$ in $G$, then $x^2 = e$.
    \item If $xn$ is real in $G$, then $x^2 = e$.
    \item If $e\neq x$ is real in $G$ and the action of $x$ on $N_j/N_{j+1}$ is nontrivial for each $j$, then 
    $xn =(xn)^{-1}$ for all $n \in N.$
    In particular, if $e\neq x \in A$ is real, then $xn$ is strongly real for all $n \in N$. In particular, $xn$ is real if and only if $xn$ is strongly real for all $n\in N$.
    
    \item Suppose the action of $A$ on $Z(N)$ is trivial, then $xn$ is not real in $AZ(N)$ for any $n\in Z(N)\setminus\{e\}$.
\end{enumerate}
\end{theorem}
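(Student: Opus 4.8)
The plan is to exploit the single feature shared by all four parts: the quotient $G/N$ is isomorphic to the abelian group $A$, and $N$ is torsion-free (being a simply connected nilpotent Lie group, $\exp\colon\mathfrak n\to N$ is a diffeomorphism). Let $\pi\colon G=A\ltimes N\to A$, $\pi(an)=a$, denote the canonical projection; it is a homomorphism with kernel $N$. Parts (1) and (2) then follow at once by projecting. If $x\in A$ and $g(xn)g^{-1}=(xn)^{-1}$ for some $g\in G$ (the case $n=e$ being exactly (1)), then applying $\pi$ gives $\pi(g)\,x\,\pi(g)^{-1}=x^{-1}$, because $\pi(xn)=x$ and $\pi((xn)^{-1})=x^{-1}$. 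Since $A$ is abelian the left-hand side equals $x$, so $x=x^{-1}$, i.e. $x^2=e$. No structure of $N$ beyond its normality is used here.

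For part (3) I would first reduce to $x\in A$: writing $x=am$ with $a\in A$, $m\in N$, the coset $xN$ equals $aN$ and the induced action of $x$ on each $N_j/N_{j+1}$ is that of $a$, so both the hypotheses and the assertion ``$xn=(xn)^{-1}$ for all $n\in N$'' are unchanged on replacing $x$ by $a$. By (2) we have $x^2=e$, so conjugation by $x$ is an involutive automorphism $\theta$ of $N$, inducing a linear involution $\theta_j$ on each $V_j:=N_j/N_{j+1}$. I read the hypothesis on the action in the sense of Definition~\ref{def-linear-action}, namely that $\theta_j$ fixes only $0$; since $\theta_j^2=\mathrm{id}$ over $\R$ its eigenvalues lie in $\{\pm1\}$, and the absence of a nonzero fixed vector forces $\theta_j=-\mathrm{id}$ on every $V_j$. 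The crux is then to upgrade ``$\theta=-\mathrm{id}$ on each graded piece'' to ``$\theta=-\mathrm{id}$ on all of $N$''.

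I expect this upgrade to be the \emph{main obstacle}, and I would carry it out on the Lie algebra $\mathfrak n=\log N$, where $\theta$ corresponds to $\phi=d\theta\in\mathrm{Aut}(\mathfrak n)$ with $\phi^2=\mathrm{id}$, and the $N_j$ correspond to a central series $\mathfrak n=\mathfrak n_0\supseteq\cdots\supseteq\mathfrak n_r=0$ with $[\mathfrak n,\mathfrak n_j]\subseteq\mathfrak n_{j+1}$. Decompose $\mathfrak n=\mathfrak n^{+}\oplus\mathfrak n^{-}$ into the $(\pm1)$-eigenspaces of $\phi$, possible in characteristic $0$. For $v\in\mathfrak n^{+}\cap\mathfrak n_j$ one has $\phi(v)=v$ while $\phi(v)\equiv -v \pmod{\mathfrak n_{j+1}}$, hence $2v\in\mathfrak n_{j+1}$ and so $v\in\mathfrak n_{j+1}$; descending induction from $\mathfrak n_r=0$ yields $\mathfrak n^{+}=0$, that is, $\phi=-\mathrm{id}$. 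As $\phi$ is an automorphism, $[\mathfrak n,\mathfrak n]=[\mathfrak n^{-},\mathfrak n^{-}]\subseteq\mathfrak n^{+}=0$, so $\mathfrak n$, and hence $N$, is abelian with $\theta(n)=n^{-1}$. A direct computation using $x^2=e$ and $\theta(n)=xnx^{-1}=n^{-1}$ gives $(xn)^2=\theta(n)\,n=n^{-1}n=e$, i.e. $xn=(xn)^{-1}$ for every $n$. Since $x\neq e$ forces $xn\neq e$, each $xn$ is an involution, so $h:=xn$ satisfies $h(xn)h^{-1}=xn=(xn)^{-1}$, exhibiting $xn$ as strongly real; because strong reality always implies reality, the two notions coincide on the coset $xN$, which gives the two ``in particular'' statements.

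For part (4), the assumption that $A$ acts trivially on $Z(N)$ makes $AZ(N)$ an internal direct product $A\times Z(N)$, since $A\cap Z(N)\subseteq A\cap N=\{e\}$; in particular $AZ(N)$ is abelian. In an abelian group an element is real if and only if it equals its own inverse, so $xn$ with $x\in A$, $n\in Z(N)$ is real in $AZ(N)$ iff $x^2=e$ and $n^2=e$. But $Z(N)\subseteq N$ is torsion-free, so $n^2=e$ forces $n=e$; hence for $n\in Z(N)\setminus\{e\}$ the element $xn$ is not real in $AZ(N)$, as claimed.
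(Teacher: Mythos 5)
Your proposal is correct; parts (1), (2) and (4) are essentially the paper's own arguments (for (1) the paper writes $y=x_0n$ and computes directly instead of projecting, but your projection argument is exactly the mechanism it uses for (2), and your (4) is a streamlined form of its computation), while part (3) takes a genuinely different route. The paper proves (3) by iterating Lemma~\ref{AV} (hence Proposition~\ref{vector group}) down the central series, exactly as in its proof of Theorem~\ref{theorem-F-nilpotent}: it writes $xn=x_1n_1=\cdots=x_r$ with each $x_{j+1}=w_jx_jw_j^{-1}$ and $x_r^2=e$, so $(xn)^2=e$. You instead linearize: since $x^2=e$, conjugation induces an involution of $\mathfrak{n}=\log N$, the fixed-point-free hypothesis kills the $+1$-eigenspace by descending through the $\mathfrak{n}_j$, so $d\theta=-\mathrm{id}$, and the automorphism property then forces $[\mathfrak{n},\mathfrak{n}]=0$. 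Your route needs more machinery (the exponential diffeomorphism and the correspondence between the central series of $N$ and of $\mathfrak{n}$), but it yields a structural fact the paper never records: under the hypothesis of (3) the group $N$ is necessarily abelian and the $x$-action is global inversion, so the apparent higher-step generality of part (3) is illusory; the paper's iteration is more elementary and stays uniform with Theorem~\ref{theorem-F-nilpotent}, where nonabelian $N$ genuinely occurs (e.g.\ its $\mathrm{GSp}(4,\R)\ltimes H$ example, which is possible there only because $x^2\neq e$ in the nonabelian factor). Finally, you were right to read ``nontrivial action on each $N_j/N_{j+1}$'' in the stronger sense of Definition~\ref{def-linear-action} (no nonzero fixed point): the paper's own proof requires this as well, since it invokes Lemma~\ref{AV}, and under the literal reading the statement is false --- take $A=\{\pm1\}$ acting on $N=\R^2$ by $\mathrm{diag}(-1,1)$; the action of $-1$ is nontrivial, yet $(-1,(0,b))$ is not real for $b\neq 0$. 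So your explicit interpretive choice matches the intended hypothesis rather than papering over a gap.
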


 \begin{remark}
It is easy to see that an element $x$ of a connected abelian Lie group $A$ is real if and only if $x^2 = e$.  
In a unipotent group very few  elements are real. In this setting, Theorem~\ref{a class of solvable groups} offers a method for constructing a class of solvable groups in which most elements are real.
\end{remark}

We then apply Theorem \ref{theorem-F-nilpotent} to an important classes of semidirect products: the groups ${\rm SL}(2,\mathbb R)\ltimes \mathbb R^n$, where $\mathbb R^n$ is an algebraic representation space for ${\rm SL}(2, \mathbb R)$.
For each nonnegative integer $n$, the group ${\rm SL}(2,\mathbb{R})$ admits a unique (up to isomorphism) irreducible representation of dimension $n+1$. This representation is conveniently realized on the vector space of homogeneous polynomials of degree $n$ in two real variables. Explicitly, we define
$V_n \; \cong \; \mathrm{Sym}^n(\mathbb{R}^2) \; \cong \; \mathbb{R}^{n+1}$
as $$V_n = \left\{ p(x,y) \;=\; \sum_{i=0}^n a_i \, x^{\,n-i} y^{\,i} \; : \; a_i \in \mathbb{R} \right\}.$$
For $g = \begin{pmatrix} a & b \\ c & d \end{pmatrix} \in {\rm SL}(2,\mathbb{R})$ (with $ad-bc=1$), the action is given by
$$(g \cdot p)(x,y) \;=\; p(ax + by,\; cx + dy),$$
endowing $V_n$ with a linear representation $\rho: {\rm SL}(2,\mathbb{R}) \to {\rm GL}(V_n)$. This yields the unique irreducible finite-dimensional representation of ${\rm SL}(2,\mathbb{R})$ of dimension $n+1$.

\begin{theorem}\label{them-homogeneous-rep}
 Let $G= {\rm SL}(2,\mathbb{R}) \ltimes V_{n}$. Then the following hold.
\begin{enumerate}
\item  For even $n$:
    \begin{enumerate}
       \item if $x = \pm I_{2\times 2}$, then $(x,v)\in G$ is real if and only if there exists $h \in {\rm SL}(2,\mathbb{R})$ satisfying $\rho(h)v=-v$,
        \item if $x = \begin{pmatrix} r & 0 \\ 0 & r^{-1} \end{pmatrix}$ with $r \neq \pm 1$, then $(x,v)$ is real for all $v \in V_{n}$.
    \end{enumerate}
\item For odd $n$:
    \begin{enumerate}
       \item if $x= I_{2\times 2}$, $(x,v)$ is real if and only if there exists $h \in {\rm SL}(2,\mathbb{R})$ satisfying $\rho(h)v = -v$,
        \item if $x \neq I_{2\times 2}$, then $(x,v)$ is real for all $v \in V_{n}$.
    \end{enumerate}
\end{enumerate}
   \end{theorem}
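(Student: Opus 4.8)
The plan is to reduce the reality of an element $(x,v)\in G$ to a pair of conditions coming from the two coordinates of the semidirect product, and then to read off the four cases according to whether $\rho(x)$ fixes a nonzero vector. Writing the product as $(g,u)(y,w)=(gy,\,u+\rho(g)w)$, a direct computation gives the conjugation formula
\[
(g,u)\,(x,v)\,(g,u)^{-1}=\bigl(gxg^{-1},\ \rho(g)v+(I-\rho(gxg^{-1}))u\bigr),
\]
while $(x,v)^{-1}=(x^{-1},-\rho(x^{-1})v)$. Hence $(x,v)$ is real in $G$ precisely when there exist $g\in{\rm SL}(2,\mathbb{R})$ and $u\in V_n$ with $gxg^{-1}=x^{-1}$ and
\[
(I-\rho(x^{-1}))\,u=-(\rho(x^{-1})+\rho(g))\,v.
\]
Projecting to the quotient ${\rm SL}(2,\mathbb{R})=G/V_n$ shows that $x$ must already be real in ${\rm SL}(2,\mathbb{R})$; since the real classes there are exactly $\pm I$ and the hyperbolic elements $\mathrm{diag}(r,r^{-1})$ with $r\neq\pm1$ (elliptic and nontrivial unipotent elements are \emph{not} conjugate to their inverses in ${\rm SL}(2,\mathbb{R})$), this explains why the theorem is organized around precisely these representatives.

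The second step is the generic case, where I would invoke Theorem~\ref{theorem-F-nilpotent} directly. Since $V_n$ is abelian its central series is $V_n\triangleright\{0\}$, the single quotient is $V_n$ itself, and the hypothesis ``no nonzero fixed point'' becomes ``$1$ is not an eigenvalue of $\rho(x)$''. If $x$ has eigenvalues $\lambda,\lambda^{-1}$ then $\rho(x)=\mathrm{Sym}^n(x)$ has eigenvalues $\lambda^{\,n-2i}$ for $0\le i\le n$; for odd $n$ none of the exponents $n-2i$ vanishes, so when $x=-I$ or $x$ is hyperbolic, $1$ is not an eigenvalue and $I-\rho(x^{-1})$ is invertible. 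The displayed linear equation is then solvable for every $v$, and Theorem~\ref{theorem-F-nilpotent} yields that $(x,v)$ is real for all $v$, which is case (2)(b).

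The fixed-point cases are handled by specializing the same equation. When $x=\pm I$ for even $n$, or $x=I$ for odd $n$, one has $\rho(x)=I$, so $gxg^{-1}=x^{-1}$ is automatic and the equation collapses to $(I+\rho(g))v=0$; thus $(x,v)$ is real if and only if $\rho(g)v=-v$ for some $g$, which is exactly the criterion in (1)(a) and (2)(a).

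The main obstacle is the remaining case: $n$ even and $x=\mathrm{diag}(r,r^{-1})$ hyperbolic. Here $\rho(x)$ has the single eigenvalue $1$, on the middle monomial $e_{n/2}=X^{n/2}Y^{n/2}$, so $I-\rho(x^{-1})$ is no longer invertible and solvability of the linear equation is equivalent to the vanishing of the $e_{n/2}$-component of $(\rho(x^{-1})+\rho(g))v$. Taking the reality witness $g_0=\begin{pmatrix}0&1\\-1&0\end{pmatrix}$ (the remaining freedom being the centralizing torus, which acts trivially on $e_{n/2}$), one computes $\rho(g_0)e_{n/2}=(-1)^{n/2}e_{n/2}$, so the obstruction equals $\bigl(1+(-1)^{n/2}\bigr)v_{n/2}$. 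I expect this sign bookkeeping on the fixed line to be the crux: the obstruction vanishes for every $v$ precisely when $n/2$ is odd, whereas when $n/2$ is even it forces the middle coefficient $v_{n/2}$ to vanish. I would therefore settle case (1)(b) by a careful analysis of how the witness acts on this one-dimensional fixed space, and—should the obstruction fail to vanish identically—check whether any further conjugating freedom can absorb it before concluding; this is the step where the statement is most delicate.
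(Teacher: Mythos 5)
Your reduction of the reality of $(x,v)$ to the pair of conditions $gxg^{-1}=x^{-1}$ and $(I-\rho(x^{-1}))u=-(\rho(x^{-1})+\rho(g))v$ is exactly the paper's argument (the paper writes the same linear system coordinatewise), and your treatment of cases (1)(a), (2)(a) and (2)(b) agrees with the paper's, the last via the no-fixed-point criterion of Proposition~\ref{vector group}. The one place where you diverge is case (1)(b), and there your sign computation is the correct one. The paper asserts that for $y=\begin{pmatrix}0&t\\-t^{-1}&0\end{pmatrix}$ one has $\rho(y)=\mathrm{antidiag}(t^n,\dots,t^2,-1,t^{-2},\dots,t^{-n})$, with middle entry $-1$; in fact (writing $X,Y$ for the polynomial variables) $\rho(y)$ sends $X^{n-i}Y^{i}$ to $(-1)^i t^{n-2i}X^{i}Y^{n-i}$, so the antidiagonal entries alternate in sign and the middle one equals $(-1)^{n/2}$. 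Consequently the paper's middle equation ``$0=v_{\frac{n+2}{2}}-v_{\frac{n+2}{2}}$'' is valid only when $n\equiv 2\pmod 4$; when $n\equiv 0\pmod 4$ it reads $0=-2v_{\frac{n+2}{2}}$. Your remaining hesitation---whether further conjugating freedom can absorb the obstruction---is settled negatively: every $g$ with $gxg^{-1}=x^{-1}$ is $g_0$ times an element of the diagonal torus, and the torus acts trivially on the middle monomial, so $\rho(g)$ acts on that line by $(-1)^{n/2}$ for every admissible $g$. Completing your argument therefore shows that for $n\equiv 0\pmod 4$ and hyperbolic $x$ the element $(x,v)$ is real if and only if $v_{\frac{n+2}{2}}=0$; for instance $\bigl(\mathrm{diag}(r,r^{-1}),\,X^2Y^2\bigr)$ is not real in ${\rm SL}(2,\mathbb{R})\ltimes V_4$, since its inverse has middle coordinate $-1$ while every conjugate of it has middle coordinate $+1$. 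So your proposal, once finished with this observation, is sound, but it exposes a genuine error in the paper: statement (1)(b) and the displayed form of $\rho(y)$ are correct only for $n\equiv 2\pmod 4$, not for all even $n$.
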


   In \cite{GLM23}, it is shown that for affine groups $G = {\rm GL}(n,\mathbb{F}) \ltimes \mathbb{F}^n$, with $\mathbb{F} = \mathbb{R}, \mathbb{C}$, if $x \in {\rm GL}(n,\mathbb{F})$ is real, then $(x,v)$ is real for every $v \in \mathbb{F}^n$. When $x$ satisfies the condition of Theorem~\ref{theorem-F-nilpotent}, a part of their proof (cf.\ Lemma~3.4 in \cite{GLM23}) follows directly from Proposition~\ref{vector group}, without the need to compute explicit conjugating elements for $(x,v)$. Furthermore, we extend this result to establish the analogous statement for \emph{rational} elements in these affine groups.

\begin{corollary}\label{cor-affine}
Let $G = {\rm GL}(n,\mathbb{R}) \ltimes \mathbb{R}^n.$
If $x \in {\rm GL}(n,\mathbb{R})$ is rational, then $xv\in G$ is rational for all $v \in \mathbb{R}^n$.
\end{corollary}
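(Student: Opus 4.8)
The plan is to verify rationality directly from the definition: for each admissible $k$ (those with $\gcd(k,{\rm Ord}(x))=1$ when $x$ has finite order, and all $k$ otherwise) I will exhibit a conjugator in $G$ carrying $(x,v)$ to its power $(x,v)^{k}$. From the group law $(g,u)(y,w)=(gy,\,u+gw)$ one gets the power formula $(x,v)^{k}=(x^{k},S_{k}(x)v)$ with $S_{k}(x)=I+x+\cdots+x^{k-1}$, and the conjugation formula $(g,u)(x,v)(g,u)^{-1}=(gxg^{-1},\,gv+(I-gxg^{-1})u)$. Since $x$ is rational in $\GL(n,\R)$, fix $g_{k}$ with $g_{k}xg_{k}^{-1}=x^{k}$; conjugating $(x,v)^{k}$ by $(g_{k}^{-1},0)$ shows $(x,v)^{k}\sim(x,\,g_{k}^{-1}S_{k}(x)v)$. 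As $(x,a)$ and $(x,b)$ are conjugate in $G$ exactly when $b\in Z(x)a+\mathrm{im}(I-x)$, where $Z(x)$ is the centralizer of $x$ in $\GL(n,\R)$, the whole statement reduces to the single membership $g_{k}^{-1}S_{k}(x)v\in Z(x)v+\mathrm{im}(I-x)$.

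Next I would split $\R^{n}=U\oplus W$ into the generalized $1$-eigenspace $U$ of $x$ and the sum $W$ of its remaining generalized eigenspaces; both are $x$-invariant. On $W$ the operator $I-x$ is invertible, so $W\subseteq\mathrm{im}(I-x)$ and the $W$-part of the membership holds automatically; this is precisely the no-fixed-point regime already covered by Proposition~\ref{vector group}. Because admissibility of $k$ guarantees that $\lambda\mapsto\lambda^{k}$ sends no eigenvalue $\ne 1$ of $x$ to $1$, the matrix $x^{k}$ has the \emph{same} generalized $1$-eigenspace $U$, and $g_{k}$ may be chosen to preserve $U\oplus W$. The problem therefore collapses to the unipotent block: writing $\nu:=x|_{U}$, and reducing modulo $\mathrm{im}(I-\nu)$ using $S_{k}(\nu)\equiv kI$ on $\bar U:=U/\mathrm{im}(I-\nu)$ together with $\mathrm{im}(I-\nu^{k})=\mathrm{im}(I-\nu)$, I must find $g$ with $g\nu g^{-1}=\nu^{k}$ whose induced action $\bar g$ on $\bar U$ satisfies $\bar g\,\bar v=k\,\bar v$.

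Producing such a $g$ is the step I expect to be the main obstacle, because the centralizer $Z(\nu)$ need not surject onto all of $\GL(\bar U)$. I plan to construct $g$ from the Jacobson--Morozov $\mathfrak{sl}_{2}$-triple $(X,H,Y)$ attached to the nilpotent $X=\log\nu$: the element $g_{0}=\exp(-\tfrac12(\ln k)H)$ satisfies $\mathrm{Ad}(g_{0})X=kX$, hence $g_{0}\nu g_{0}^{-1}=\nu^{k}$. By $\mathfrak{sl}_{2}$-representation theory $\bar U=\mathrm{coker}(X)$ is spanned by lowest-weight vectors, on which $g_{0}$ acts by the scalar $k^{(m-1)/2}$ on the part coming from size-$m$ Jordan blocks; as this block-scalar action lies in the image of the reductive part $\prod_{m}\GL_{p_{m}}$ of $Z(\nu)$, we get $\bar g_{0}\in\overline{Z(\nu)}$. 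Correcting $g_{0}$ by a suitable element of $Z(\nu)$ and absorbing the scalar $kI\in Z(\nu)$ then yields a conjugator $g$ acting on $\bar U$ as the scalar $k$, exactly as required; at the group level this scalar is the dilation $(kI,0)$, which conjugates $(\nu,v)$ to $(\nu,kv)$. Negative $k$ are reduced to $k>0$ by first using that a nilpotent $X$ is $\GL$-conjugate to $-X$, again via its $\mathfrak{sl}_{2}$-triple.

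Everything else is routine: the two conjugation/power identities, the reduction to a membership in $Z(x)v+\mathrm{im}(I-x)$, the compatibility of $g_{k}$ with the decomposition $U\oplus W$, and the identification of the $g_{0}$-action on $\mathrm{coker}(X)$ with an element of $\overline{Z(\nu)}$ are all direct linear-algebra verifications, while the no-fixed-point half is supplied by Proposition~\ref{vector group}.
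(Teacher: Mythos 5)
Your route is genuinely different from the paper's. The paper argues by cases on ${\rm Ord}(x)$: when $1$ is an eigenvalue and ${\rm Ord}(x)=\infty$ it notes that $xv$ also has infinite order, so rational coincides with real, and it quotes Lemma~3.10 of \cite{GLM23}; when ${\rm Ord}(x)=m<\infty$ it uses that $x$ is then semisimple, so the $1$-eigenspace carries the trivial action, splits that eigenspace off the Jordan form, applies Proposition~\ref{vector group} to the complementary block (where $x$ has no nonzero fixed vector), and glues the conjugators back together. Your reduction of the conjugacy of $(x,a)$ and $(x,b)$ to the membership $b\in Z(x)a+\mathrm{im}(I-x)$, the generalized-eigenspace splitting, and the Jacobson--Morozov construction of a conjugator acting as a block-scalar on $\mathrm{coker}(X)$ are correct in substance and yield a self-contained argument that does not lean on the reality result of \cite{GLM23}. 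The price is that the $\mathfrak{sl}_2$ machinery is only ever needed when $x$ has a nontrivial Jordan block at eigenvalue $1$, which forces ${\rm Ord}(x)=\infty$; there only $k=\pm1$ must be treated, and a far simpler conjugator (alternating signs along each Jordan block, which induces the identity on $\mathrm{coker}(X)$) already suffices.

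The one point you must repair is the set of exponents you test. Rationality of $(x,v)$ concerns those $k$ with $\langle(x,v)^k\rangle=\langle(x,v)\rangle$, i.e.\ $k$ coprime to ${\rm Ord}((x,v))$, not to ${\rm Ord}(x)$; when ${\rm Ord}(x)=\infty$ this means only $k=\pm1$, not ``all $k$''. As written, your first step ``fix $g_k$ with $g_kxg_k^{-1}=x^k$'' breaks down for $|k|\ge 2$ when $x$ has infinite order, since such a $g_k$ need not exist (e.g.\ $x=\mathrm{diag}(2,1/2)$ is real, hence rational, but is not conjugate to $x^2$); restricting to the correct $k$'s removes the obstruction. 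You should also record why nothing is lost when ${\rm Ord}(x)=m<\infty$: either $S_m(x)v=0$, in which case ${\rm Ord}((x,v))=m$, or $S_m(x)v\neq0$, in which case ${\rm Ord}((x,v))=\infty$, and in both cases the required $k$'s lie among those coprime to $m$. Two smaller fixes: the exponent should be $g_0=\exp\bigl(+\tfrac12(\ln k)H\bigr)$ to obtain $\mathrm{Ad}(g_0)X=kX$, and the element conjugating $X$ to $-X$ is not the Weyl element of the triple (which sends $X$ to $-Y$ and does not preserve $\mathrm{im}(X)$) but, say, the alternating-sign diagonal in a Jordan basis, whose induced action on $\mathrm{coker}(X)$ is again visibly in the image of $Z(\nu)$. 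With these adjustments your argument goes through.
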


The paper is organized as follows. In \S2 we recall basic definitions and preliminary facts about real and rational elements and deduce Proposition \ref{vector group}. In \S3 we prove Theorem \ref{theorem-F-nilpotent}. Theorem \ref{a class of solvable groups} is taken in \S4. In \S5 we apply results on ${\rm SL}(2,\mathbb R)\ltimes \mathbb R^n$, and affine groups ${\rm GL}(n,\mathbb R)\ltimes \mathbb R^n$. 

\section{Priliminaries}
In this section, we recall the basic definitions, fix our notation, and present the key observations in Proposition \ref{vector group}.
Let $G$ be a group and let $x \in G$ be an element. We denote by $\langle x \rangle$ the cyclic subgroup generated by $x$ in $G$, and by ${\rm Ord}(x)$ the order of $x$. 

\subsection{Rational elements}
We now recall the definition of a rational element in $G$ from \cite[Section~5]{NT08}.  

\begin{definition}\label{def-rational-element}
An element $x \in G$ is called a \emph{rational element} if whenever $\langle y \rangle = \langle x \rangle$, the element $y$ is $G$-conjugate to $x$. Equivalently, for every integer $k$ relatively prime to ${\rm Ord}(x)$, the element $x$ is conjugate to $x^k$.
\end{definition}

Suppose $G$ is a finite group. Then Definition~\ref{def-rational-element} is equivalent to the following: an element $x \in G$ is called a \emph{rational element} if, for every integer $k$ relatively prime to the order of $G$, the element $x$ is conjugate to $x^k$. 
Note that the conjugate of a rational element is again a rational element.  
%The conjugacy class of a rational element is called a \emph{rational conjugacy class}.  
We denote by ${\rm Cl}(x)$ the conjugacy class of $x$.  
It is easy to see that if $x \in G$ is rational, then $xN$ is rational for any normal subgroup $N$ of $G$.  
Moreover, if $x \in G$ is such that $\gcd({\rm Ord}(x), {\rm Ord}(N)) = 1$, then $xN$ being rational implies that $x$ is rational; see Lemma~5.1(e) of \cite{NT08}. 
The following result is in a similar spirit, but in a more general setting; in particular, we consider groups that include certain finite groups, algebraic groups and more generally, a class of Lie groups. Part~$(1)$ of the proof of Proposition \ref{vector group} is contained in Proposition 2.1 of \cite{DM17}. For completeness, we include a detailed proof here. 

\begin{proposition}\label{vector group}
Let $G = H \ltimes V$, where $V$ is an $\mathbb{F}$-vector subgroup and the conjugation action of $H$ on $V$ is $\mathbb{F}$-linear.  
Let $e\neq x \in H$ and suppose that the action of $x$ on $V$ has no nonzero fixed point. Then:
\begin{enumerate}
    \item for every $v \in V$, there exists a unique $w \in V$ (depending on $v$) such that  
    $x v = w x w^{-1}$,
    \item if $x$ is real in $H$, then $xv$ is real in $G$ for all $v \in V$,
    \item if $x$ is rational in $H$, then $xv$ is rational in $G$ for all $v \in V$.
\end{enumerate}
\end{proposition}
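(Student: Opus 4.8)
The plan is to establish part (1) first and then obtain parts (2) and (3) as essentially formal consequences, since the single identity $xv = wxw^{-1}$ produced in (1) carries all the conjugacy information we need. The guiding observation is that this identity is equivalent, via uniqueness of the decomposition $G = H \ltimes V$, to a single linear equation in $V$ whose solvability is exactly the no-fixed-point hypothesis.

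For part (1), I would write $V$ additively and let $\alpha = \alpha_x \colon V \to V$ denote the $\mathbb{F}$-linear automorphism $v \mapsto xvx^{-1}$ coming from the conjugation action. Expanding $wxw^{-1}$ in the semidirect product, using that $V$ is abelian and normal and that $\alpha$ is a homomorphism, I obtain
$$wxw^{-1} = \bigl(w - \alpha(w)\bigr)\,x = (I-\alpha)(w)\,x, \qquad xv = \alpha(v)\,x,$$
where in each expression the first factor lies in $V$ and the second in $H$. By uniqueness of the decomposition $G = H \ltimes V$, the equation $xv = wxw^{-1}$ is therefore equivalent to $(I-\alpha)(w) = \alpha(v)$. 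The hypothesis that $x$ has no nonzero fixed point on $V$ says precisely that $\ker(I-\alpha) = \{0\}$, so $I-\alpha$ is injective; since $V$ is finite-dimensional over $\mathbb{F}$, it is then bijective, and $w = (I-\alpha)^{-1}\alpha(v)$ is the unique solution. This proves (1).

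The key feature I would extract is that the identity $xv = wxw^{-1}$ immediately gives $(xv)^k = w x^k w^{-1}$ for \emph{every} integer $k$, with the \emph{same} conjugator $w$. In particular $xv \sim_G x$, so ${\rm Ord}(xv) = {\rm Ord}(x)$ and $\langle xv\rangle = w\langle x\rangle w^{-1}$, and each power $(xv)^k$ is $G$-conjugate to $x^k$. For part (2), realness of $x$ in $H$ gives $x^{-1}\sim_H x$, hence $x^{-1}\sim_G x$; combining this with $(xv)^{-1} = wx^{-1}w^{-1}\sim_G x^{-1}$ and $xv\sim_G x$ yields the chain $(xv)^{-1}\sim_G x^{-1}\sim_G x\sim_G xv$, so $xv$ is real in $G$. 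For part (3), let $z$ be any generator of $\langle xv\rangle$; since $\langle xv\rangle = w\langle x\rangle w^{-1}$, we have $z = wyw^{-1}$ for some generator $y$ of $\langle x\rangle$. Rationality of $x$ in $H$ gives $y\sim_H x$, hence $y\sim_G x$, and therefore $z = wyw^{-1}\sim_G y\sim_G x\sim_G xv$. Thus every generator of $\langle xv\rangle$ is $G$-conjugate to $xv$, which is exactly the condition (Definition~\ref{def-rational-element}) for $xv$ to be rational in $G$; phrasing it this way avoids any ambiguity about $\gcd$ with an infinite order.

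The only genuinely substantive point is part (1): correctly carrying out the semidirect-product conjugation to reach the linear equation $(I-\alpha)(w) = \alpha(v)$, and then invoking finite-dimensionality of $V$ to upgrade the injectivity of $I-\alpha$ (which is precisely the no-fixed-point hypothesis) to bijectivity. I expect this invertibility step to be the main obstacle, in the sense that everything else is pure conjugacy-class bookkeeping; the crucial structural remark making parts (2) and (3) immediate is that one and the same $w$ realizes $(xv)^k = wx^kw^{-1}$ for all $k$, so that the order, all powers, and the whole cyclic subgroup $\langle xv\rangle$ are controlled simultaneously by their counterparts for $x$.
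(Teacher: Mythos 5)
Your proof is correct and follows essentially the same route as the paper: reduce $xv = wxw^{-1}$ to a linear equation on $V$ (your $(I-\alpha)(w)=\alpha(v)$ is equivalent to the paper's $v=(\sigma(x)^{-1}-I)w$), use the no-fixed-point hypothesis for injectivity and finite-dimensionality for surjectivity, then exploit that the single conjugator $w$ gives $(xv)^k = wx^kw^{-1}$ for all $k$. Your handling of (3) via generators of $\langle xv\rangle = w\langle x\rangle w^{-1}$ is a slightly cleaner bookkeeping of the same mechanism (the paper instead writes out the explicit conjugator $wh^{-1}w^{-1}$ and notes ${\rm Ord}(xv)\mid{\rm Ord}(x)$), but it is not a genuinely different argument.
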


\begin{proof}
   $(1)$ Let $\sigma$ be the conjugation action of $H$ on $V$. Since the conjugation action of $x$ on $V$ has no nonzero fixed points, it follows that $1$ is not an eigenvalue of the linear operator $\sigma(x)$. Thus the operator 
    $$\sigma(x)^{-1}-I:V\to V$$
 is invertible. Indeed, if there is a non-zero $v\in V$ such that $v$ is in the kernel of  
$\sigma(x)^{-1}-I$, then $v$ is a nontrivial fixed point under conjugation by $x$, contradicting the hypothesis.
Thus the kernel is trivial, so the map is injective, and since $V$ is finite‑dimensional, $\sigma(x)^{-1}-I$ is also surjective.
 Hence, for any vector $v\in V$, there exists a unique $w$ in $V$ such that $$v=(\sigma(x)^{-1}-I)w.$$ Equivalently, $v=x^{-1}wx-w$, which, in turn, in multiplicative notation, $xv=wxw^{-1}.$

    Now assume for $v\in V$,
     $xv=w_1xw_1^{-1}=w_2xw_2^{-1}$ for some $w_1, w_2\in V.$ Then $w_2^{-1}w_1$ commutes with $x$, i.e., it is a fixed point under the conjugation action of $x$.
     By hypothesis, the only fixed point is the zero vector, so $w_2^{-1}w_1=0$, forcing
      $w_1=w_2.$ This shows uniqueness of the choice of $w$, completing the proof of statement $(1)$.

   $(2)$ Let $h\in H$ be such that $x=hx^{-1}h^{-1}$. Then
    $xv=whx^{-1}h^{-1}w^{-1}$ and 
    \begin{align*} (xv)^{-1}&=(wxw^{-1})^{-1}=wx^{-1}w^{-1}\\&=w(wh)^{-1}(wh)x^{-1}(wh)^{-1}(wh)w^{-1}\\&=(wh^{-1}w^{-1})(xv)(wh^{-1}w^{-1})^{-1}.
    \end{align*}
    Therefore, $xv$ is conjugate to $(xv)^{-1}.$ This proves $(2).$

    $(3)$ Given that for some $h \in H$, $x=hx^{k}h^{-1}$. Then $xv=wxw^{-1}=whx^{k}h^{-1}w^{-1}$ and 
    \begin{align*}
        (xv)^k&=wx^kw^{-1}=wh^{-1}xhw^{-1}\\
        &=wh^{-1}(w^{-1}w)x(w^{-1}w)hw^{-1}\\
        &=(wh^{-1}w^{-1})(wxw^{-1})(whw^{-1})\\
        &=(wh^{-1}w^{-1})(xv)(wh^{-1}w^{-1})^{-1}.
          \end{align*}
    Therefore, $xv$ is conjugate to $(xv)^{k}.$ Note that if ${\rm Ord}(x)$ is $n$, then we observe that
    ${\rm Ord}(xv)$ divides ${\rm Ord}(x)$. Indeed, $(xv)^n=(wxw^{-1})^n=wx^nw^{-1}=e.$
    Therefore, it follows that $xv$ is conjugate to $(xv)^r$ for all $r$ co-prime to ${\rm Ord}(xv)$.
    This proves $(3).$
\end{proof}

\begin{remark}
   Proposition~\ref{vector group} shows that if $G/V$ contains a rational element $xV$ of order $m$, and $x$ satisfies the hypothesis of Proposition~\ref{vector group}, then $G$ contains a rational element of order $m$. One can see Lemma 5.2 of \cite{NT08} in a similar direction for the finite group context.
\end{remark}

\subsection{$\mathbb F$-nilpotent group}

 We note an observation in the following lemma, which will be used to prove Theorem \ref{theorem-F-nilpotent}. 

\begin{lemma}\label{lemma-same-action}
Let $G$ be a group with an $\mathbb{F}$-nilpotent normal subgroup $N$ of $G$ such that the action of $G$ on $N$ is linear. Let $x \in G$.  
Then, for each $j$, the action of $x$ on $N_j/N_{j+1}$ is same as the action of $xN$ on $N_j/N_{j+1}$. \qed
\end{lemma}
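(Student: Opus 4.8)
The plan is to reduce the statement to a single assertion: that the conjugation action of the subgroup $N$ itself on each quotient $N_j/N_{j+1}$ is trivial. Once this is established, the action of $G$ on $N_j/N_{j+1}$ descends to a well-defined action of the quotient $G/N$, and so for any $x \in G$ the operator induced by $x$ must coincide with the one induced by its coset $xN$. This is exactly the observation recorded in the paragraph preceding Theorem~\ref{theorem-F-nilpotent}, namely that the action on each graded piece factors through $G/N$, and the lemma is just its explicit formulation at the level of individual elements.

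To carry this out, I would first invoke the defining property of a central series: since $N = N_0 \triangleright N_1 \triangleright \cdots \triangleright N_r = \{e\}$ is central, we have $[N, N_j] \subseteq N_{j+1}$ for every $j$. Then, for an arbitrary $n \in N$ and $m \in N_j$, I would compute the commutator $n m n^{-1} m^{-1} = [n, m] \in [N, N_j] \subseteq N_{j+1}$, so that $n m n^{-1} \equiv m \bmod N_{j+1}$. This shows precisely that conjugation by any element of $N$ acts as the identity on $N_j/N_{j+1}$, i.e. $N$ lies in the kernel of the induced representation of $G$ on $N_j/N_{j+1}$.

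With $N$ in the kernel, the $G$-action on $N_j/N_{j+1}$ factors as $G \to G/N \to \GL(N_j/N_{j+1})$, and this factoring is compatible with the $\mathbb{F}$-linear structure assumed on each quotient. I would then conclude by noting that for $x \in G$, the image of $x$ and the image of $xN$ under this factored map are literally the same linear operator, which is the claim. I do not expect any genuine obstacle here; the only point requiring a little care is to confirm that the hypothesis ``the action of $G$ on $N$ is linear'' means conjugation (as in Definition~\ref{def-linear-action} and the semidirect-product setup of Theorem~\ref{theorem-F-nilpotent}), so that the inner action of $N$ on itself is the relevant one and the commutator computation applies verbatim.
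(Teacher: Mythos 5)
Your proposal is correct and follows essentially the same route as the paper, which simply asserts that the action of $N$ on each $N_j/N_{j+1}$ is trivial and concludes that the actions of $x$ and $xN$ coincide; you merely supply the commutator computation $[N,N_j]\subseteq N_{j+1}$ that justifies that assertion. No substantive difference.
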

\begin{proof}
  Since the action of $N$ on $N_j/N_{j+1}$ is trivial, the action of $x$ on each $N_j/N_{j+1}$ coincide with the action of $xN$ on $N_j/N_{j+1}$. 
\end{proof}
Note that, if $N$ is a simply connected real Lie group, then it satisfies all the properties in Definition~\ref{def-F-nil}, and hence is an $\mathbb{R}$-nilpotent subgroup. Also, if $N$ is a nilpotent algebraic group defined over a field $\mathbb F$, then it is $\mathbb F$-nilpotent.
%%%%%%%%%%%%%%%%%%%%%%%%%%%%%%%%%%%%%%%%%
%%%%%%%%%%%%%%%%%%%%%%%%%%%%%%%%%%%%%%%%%%%
\section{Groups over a field $ \mathbb F$}
In this section, we prove Theorem~\ref{theorem-F-nilpotent} and illustrate, with an example, how it can be used to compute all real or rational elements in a certain type of finite group.

\vspace{.2cm}
Proof of Theorem \ref{theorem-F-nilpotent}:
  Let \(x \in H\) be real and let \(n \in N\). We proceed inductively through the central series
\[
N = N_0 \triangleright N_1 \triangleright \cdots \triangleright N_r = \{e\}.
\] We consider the coset $xnN_1$
  inside the product $<x>\cdot N/N_1.$ 
   Since the action of $x$ has no non-zero fixed point in $N/N_1$, Proposition \ref{vector group}, guarantees that $xnN_1$ is conjugate to its inverse. Explicitly, there exists $w\in N/N_1$ such that $$xnN_1=wxw^{-1}N_1.$$
   Hence, we can write
   $$xn=x_1n_1,$$ with $x_1=wxw^{-1}$ and $n_1\in N_1.$
    Notice that, by Lemma \ref{lemma-same-action}, $x_1$ has the same action on $N_1/N_2$ as 
$x$ does, and since $x$ is real (conjugate to its inverse), so is $x_1$. Therefore, Proposition \ref{vector group} applies again to $x_1n_1N_2$ in $<x_1>\cdot N_1/N_2$ showing it is conjugate to its inverse. That yields $$x_1n_1=x_2n_2,$$ where $x_2=w_1x_1w_1^{-1}$ for some $w_1\in N$, and $n_2\in N_2.$ Note that, by Lemma \ref{lemma-same-action} the action of $x_2$ on $N_2/N_3$ has no non-zero fixed point, and $x_2$ is real, so we repeat this process down the chain of successive quotients.
Suppose at stage $j$, we have
\[
x_{j-1} n_{j-1} = x_j n_j,
\]
where \(x_j = w_{j-1} x_{j-1} w_{j-1}^{-1}$, \(n_j \in N_j\), and \(x_j\) is real. Since the action of \(x_j\) on \(N_j / N_{j+1}\) has no non-zero fixed point, by applying Proposition \ref{vector group} we have
\[
x_j n_j = x_{j+1} n_{j+1}
\]
for some $x_{j+1} = w_j\, x_j\, w_j^{-1}$ and \(n_{j+1} \in N_{j+1}\), and so \(x_{j+1}\) remains real.
Continuing this process down the chain yields
\[
x n = x_1 n_1 = x_2 n_2 = \cdots=x_{r-1}n_{r-1} = x_r n_r = x_r,
\]
where $n_r\in N_r=\{e\}$, and $x_r$ is real. Therefore \(x n\) is real. Since \(n \in N\) is arbitrary, this proves that $xn$ is real for all $n\in N$, as required.  

$(2)$ The proof proceeds just as in case $(1).$ The key point is: because $x$ is a rational element in 
$H$, and since $xn=x_1n_1$ with $x_1=wxw^{-1}$, it immediately follows that $x_1$ is rational. In the same way, each $x_j$ defined as in the equation above must also be rational. Therefore, for every $v\in V$, the element $xv$ remains rational.
\qed

The following example demonstrates that the above theorem is also very helpful from a computational viewpoint. 
\begin{example}
    Let $G={\rm PSL}(2,\mathbb{Z}_2) \ltimes \mathbb Z_2^2$. The order of the group is $ 6 \times 4 = 24$. The group ${\rm PSL}(2,\mathbb{Z}_2)$ have three rational classes, namely, $\begin{pmatrix}
        1 & 0\\
        0 & 1
    \end{pmatrix},$ $\begin{pmatrix}
        1 & 1\\
        0 & 1
    \end{pmatrix}$ and $\begin{pmatrix}
        1 & 1\\
        1 & 0
    \end{pmatrix}$. Note that $x= \begin{pmatrix}
        1 & 1\\
        1 & 0
    \end{pmatrix}$ has order three and $x$ is conjugate to $x^2$.     We check the reality/rationality of $xv$ by the following computations, let $xv=\begin{pmatrix}
        1 & 1 & v_1\\
        1 & 0 & v_2\\
        0 & 0 & 1
    \end{pmatrix}.$ The order of $xv$ is three, therefore $(xv)^k$ is either $xv$ or $(xv)^2$. The $k=1$ case is trivial, and for the other case,  $xv$ is conjugate to $(xv)^2$ by $\begin{pmatrix}
        0 & 1 & v_2\\
        1 & 0 & v_2\\
        0 & 0 & 1
    \end{pmatrix}.$
    
    On the other hand, it is easy to see that $x$ has no non-zero fixed point in $\mathbb Z_2^2$. Therefore, the same conclusion follows directly from Theorem \ref{theorem-F-nilpotent}. Computing the conjugating matrix explicitly is not required.
\end{example}
%%%%%%%%%%%%%%%%%%%%%%%%%%%%%%%%%%%%%%%
\subsection{Example and Remark}\label{example-remark}
We give an example showing that the hypothesis of Theorem~\ref{theorem-F-nilpotent} can be satisfied not only when the nilradical is abelian, but also when it is a higher-step nilpotent group. For this purpose, consider the group $G=\mathrm{GSp}(4,\mathbb R)\ltimes H$, where $H$ is a Heisenberg group, and
\[
\mathrm{GSp}(4,\mathbb R) = \left\{g\in {\rm GL}(4, \mathbb R): g^T J g = \mu(g)\, J,\ \mu(g)\in\mathbb R^\times\right\},
\]
with \(\mu(g)\) is the \emph{similitude factor} and \(J\) is the standard symplectic form on \(\mathbb R^4\) defined as
\[
J = \begin{pmatrix}0 & I_2 \\ -I_2 & 0\end{pmatrix}.
\]
 
The group $\mathrm{GSp}(4,\mathbb R)$ acts naturally by automorphisms on the  $5$-dimensional Heisenberg group $H$, where as a manifold $H$ is equal to $\mathbb R^4\times\mathbb R$ with elements written $(v,t)$.  
Its group operation is
\[
(v,t)\cdot (v',t') = \left(v+v',\ t+t' + \tfrac12\,\omega(v,v')\right),
\]
where \(\omega(v,v') = v^T J v'\). Then the center of $H$, 
\(Z(H)=\{(0,t)\}\cong\mathbb R\), and \(H/Z(H)\cong\mathbb R^4\).

\medskip

If \(g\in\mathrm{GSp}(4,\mathbb R)\) with \(g^T J g = \mu(g)\,J\), it induces an automorphism
\[
\varphi_g: (v,t)\mapsto \big(g v,\ \mu(g)\, t\big).
\]
 This defines the semidirect product $G= \mathrm{GSp}(4,\mathbb R) \ltimes H$.

\medskip

Consider the matrix
\[
x = \begin{pmatrix}P & 0 \\ 0 & P^{-1}\end{pmatrix}\in \mathrm{GSp}(4,\mathbb R), \text{ where } P = \begin{pmatrix}0 & 1\\ -1 & 0\end{pmatrix}.
\]
Since \(P^{-1}=-P\), we have \(x = \operatorname{diag}(P, -P)\). A direct computation shows
\[
x^TJx=-J,
\]
so \(\mu(x)=-1\). The matrix $x$ is real as it is conjugate to its inverse by $y=\begin{pmatrix}
    0 & I_2\\
    I_2 & 0
    \end{pmatrix} \in \mathrm{GSp}(4,\mathbb R). $

\medskip

It is easy to see that the eigenvalues of \(P\) are \(\pm i\). Hence \(x\) has eigenvalues \(\pm i\) with algebraic multiplicity two, so \(1\) is not an eigenvalue.  
Therefore \(x\) acts without nonzero fixed points on the quotient \(H/Z(H)\). Also, the action of $x$ on \(Z(H)\) is multiplication by \(\mu(x)=-1\). Therefore, by Theorem~\ref{theorem-F-nilpotent}, we conclude that $xn$ is real in $G$ for all $n \in H$. This completes our goal. It is possible to give the above example for $G={\rm GSp}(n,\mathbb R)\ltimes H$, however, we are not going into it here.

%%%%%%%%%%%%%%%%%%%%%%%%%%%%%%%%%%%%%%%%%%%%%%
%%%%%%%%%%%%%%%%%%%%%%%%%%%%%%%%%%%%%%%%%%%%%%%%
\section{Solvable Lie groups}
In this section, we present a proof of Theorem \ref{a class of solvable groups}. Additionally, we provide examples and remarks illustrating that the condition given in Theorem \ref{a class of solvable groups} is sufficient, but not necessary. We begin this section with the following lemma.

\begin{lemma}\label{AV}
Let $A$ be an abelian subgroup and $V$ a vector normal subgroup of $G$ such that $G = A\ltimes V$.  
Suppose that the action of $A$ on $V$ has no nonzero fixed point. Let $x\in A$.  
If $x$ is real in $G$ and the action of $x$ on $V$ has no nonzero fixed point, then $xv = (xv)^{-1}$ for all $v \in V$.
\end{lemma}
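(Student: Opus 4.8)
### Plan

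The statement to prove is Lemma~\ref{AV}. Here $G = A \ltimes V$ with $A$ abelian, $V$ a vector normal subgroup, the $A$-action $\mathbb{F}$-linear (implicit from the setup), $x \in A$ real in $G$, and $x$ acting on $V$ without nonzero fixed points. The goal is the strong conclusion that $xv$ is not merely real but genuinely self-inverse: $xv = (xv)^{-1}$ for every $v \in V$.

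The plan is to exploit the rigidity forced by conjugating $xv$ to its inverse \emph{inside} the semidirect product, combined with the commutativity of $A$. First I would invoke Proposition~\ref{vector group}(1): since $x$ has no nonzero fixed point on $V$, the operator $\sigma(x)^{-1}-I$ is invertible, so for each $v$ there is a unique $w \in V$ with $xv = wxw^{-1}$. This lets me replace the element $xv$ by the conjugate $wxw^{-1}$, reducing the reality question for $xv$ to one about the ``pure'' element $x$. Next I would write a general conjugator in $G$ as $g = aw'$ with $a \in A$, $w' \in V$, and impose the reality relation $g\,(xv)\,g^{-1} = (xv)^{-1}$. Expanding both sides using the semidirect product multiplication and the fact that $A$ is abelian (so the $A$-component of $g(xv)g^{-1}$ is just $x$, while the $A$-component of $(xv)^{-1}$ is $x^{-1}$), comparing the $A$-parts yields $x = x^{-1}$, i.e.\ $x^2 = e$. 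This is exactly the mechanism of Theorem~\ref{a class of solvable groups}(1), which I would cite or reprove in one line.

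Once $x^2 = e$ is established, the vector part becomes a direct computation. With $x^2=e$, the operator $\sigma(x)$ is an involution on $V$ with $1$ not an eigenvalue, so $\sigma(x) = -I$ on $V$. The crucial observation is then that $\sigma(x)=-I$ means conjugation by $x$ sends every $v \in V$ to $-v$ (in additive notation), i.e.\ $x v x^{-1} = -v = v^{-1}$ multiplicatively. From here I would compute $(xv)^{-1} = v^{-1}x^{-1} = v^{-1}x = (xvx^{-1})x = x v x^{-1} x$; using $x^2 = e$ and $xvx^{-1}=v^{-1}$ and the abelian/involution relations, I expect the two sides to collapse to show $xv = (xv)^{-1}$ directly. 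Concretely, $(xv)(xv) = x(vx)v = x(x v^{-1})v = x^2 v^{-1} v = e$, so $xv$ is an involution.

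The main obstacle I anticipate is not the algebra but correctly extracting $x^2 = e$: one must be careful that the conjugator $g$ ranges over \emph{all} of $G = A \ltimes V$, not merely over $V$, and verify that the $A$-component of a conjugate $g(xv)g^{-1}$ is always $x$ regardless of the $A$-part of $g$ (this uses that $A$ is abelian and normalizes nothing extra). Once the projection $G \to G/V \cong A$ is used, the reality of $xv$ forces reality of $x$ in the abelian group $A$, which immediately gives $x^2=e$; the step $\sigma(x)=-I$ and the final involution computation are then routine. I would organize the write-up as: (i) reduce via the projection to $x^2=e$; (ii) deduce $\sigma(x)=-I$; (iii) conclude $(xv)^2 = e$.
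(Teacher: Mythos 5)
Your proposal is correct and follows essentially the same route as the paper's proof: both first extract $x^2=e$ by pushing the reality of $x$ into the abelian quotient $G/V\cong A$ (the mechanism of Theorem~\ref{a class of solvable groups}(1)), and both then combine this with the no-nonzero-fixed-point hypothesis to conclude. The only difference is cosmetic: the paper finishes by writing $xv=wxw^{-1}$ via Proposition~\ref{vector group}(1) and noting $wx^{-1}w^{-1}=(xv)^{-1}$, whereas you deduce $\sigma(x)=-I$ (legitimate over $\mathbb{R}$ since $\sigma(x)^2=I$ and $1$ is not an eigenvalue) and compute $(xv)^2=e$ directly, which in fact makes your initial appeal to Proposition~\ref{vector group}(1) unnecessary.
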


\begin{proof}
Note that $x$ being real implies $x^2 = e$. Since $x$ has no nonzero fixed point on $V$,  
by Proposition \ref{vector group}, for each $v \in V$ there exists a unique $w \in V$ such that 
\[
xv = w x w^{-1}.
\]
Since $x^2 = e$, we have 
\[
xv = w x^{-1} w^{-1} = (w x w^{-1})^{-1} = (xv)^{-1}.
\]
Thus $xv$ is real for all $v \in V$.
\end{proof}

In the following, we provide some examples in support of Lemma \ref{AV}.
\begin{example}
  Let $G=S^1\ltimes \mathbb R^2=
\left\{
\begin{pmatrix}
cos t & -sin t & x \\
sin t & cos t & y \\
0 & 0 & 1
\end{pmatrix}
: t, x, y \in \mathbb{R}
\right\}
$.  By Lemma \ref{AV}, we conclude that $(-I,n)$ is real for all $n\in \mathbb R^2.$ The elements $(I,n)$ are also real, and they are conjugate to their inverses by the diagonal matrix $(-1,-1,1)$.
\end{example}

The above observation shows that although the action of $I$ on $\mathbb{R}^2$ is trivial, $(I,n)$ is real for all $n \in \mathbb{R}^2$. This raises the question: Is the condition necessary for nontrivial elements in a solvable group? The following remark demonstrates that this can indeed occur for non-identity elements, showing that the condition in the theorem is sufficient but not necessary.

\begin{remark}\label{remark-sufficient}
This example shows that the condition in Lemma \ref{AV} is not necessary, even for non-identity elements. 
Consider 
\[
G = S^1 \ltimes \mathbb{R}^2 =
\left\{
\begin{pmatrix}
\cos(2t) & -\sin(2t) & x & 0 & 0 \\
\sin(2t) & \cos(2t)  & y & 0 & 0 \\
0 & 0 & 1 & 0 & 0 \\
0 & 0 & 0 & \cos t & -\sin t \\
0 & 0 & 0 & \sin t & \cos t
\end{pmatrix}
: t,x,y \in \mathbb{R}
\right\}.
\]
For $t = \pi$, the element
\[
A =
\begin{pmatrix}
1 & 0 & 0 & 0 & 0 \\
0 & 1 & 0 & 0 & 0 \\
0 & 0 & 1 & 0 & 0 \\
0 & 0 & 0 & -1 & 0 \\
0 & 0 & 0 & 0 & -1
\end{pmatrix}
\]
has order two and acts trivially on $\mathbb{R}^2$.  
For any $\begin{pmatrix} x \\ y \end{pmatrix} \in \mathbb{R}^2$, the element
\[
\begin{pmatrix}
1 & 0 & x & 0 & 0 \\
0 & 1 & y & 0 & 0 \\
0 & 0 & 1 & 0 & 0 \\
0 & 0 & 0 & -1 & 0 \\
0 & 0 & 0 & 0 & -1
\end{pmatrix}
\]
is conjugate to its inverse via
\[
\begin{pmatrix}
-1 & 0 & 0 & 0 & 0 \\
0 & -1 & 0 & 0 & 0 \\
0 & 0 & 1 & 0 & 0 \\
0 & 0 & 0 & 0 & -1 \\
0 & 0 & 0 & 1 & 0
\end{pmatrix}.
\]
\end{remark}

Proof of Theorem~\ref{a class of solvable groups}:
 $(1)$ Suppose $x \in A$ is conjugate to $x^{-1}$ in $G$.  
Then there exists $y \in G$ such that 
\[
x = y x^{-1} y^{-1}.
\]
Write $y = x_0 n$ with $x_0 \in A$ and $n \in N$. Then
\[
x = x_0 n x^{-1} (x_0 n)^{-1}
   = x^{-1} \big( x x_0 n x^{-1} n^{-1} x_0^{-1} \big),
\]
and hence
$x^2 = x_0 ((x n x^{-1}) (n^{-1})) x_0^{-1}.$ Thus, $x^2 \in N \cap A = \{e\}$, proving $(1).$

$(2)$ Since $xn$ is real implies $xN$ is real in $G/N$, it follows that $x$ is real in $A$, and hence $x^2=e.$ 

$(3)$ We proceed as in the proof of Theorem \ref{theorem-F-nilpotent}.  
Let $x \in A$ and $n \in N$, and consider the central series
\[
N = N_0 \triangleright N_1 \triangleright \cdots \triangleright N_r = \{e\}.
\]
In $\langle x \rangle \cdot N / N_1$, Lemma \ref{AV} gives $x n N_1 = (x n N_1)^{-1}$.  
Since $x^2 = e$ and $x_1 = w x w^{-1}$ also satisfies $x_1^2 = e$, repeating this step down the series yields
\[
x n = x_1 n_1 = \cdots =x_{r-1}n_{r-1}= x_r,
\]
with $x_r^2 = e$. Hence $(x n)^2 = e$, i.e., $xn = (x n)^{-1}$. Therefore, it is clear that $xn$ is real if and only if $xn$ is strongly real for all $n\in N$.

$(4)$ Since the action of $A$ on $Z(N)$ is trivial, and $Z(N)$ is a vector space, we conclude:
\[
xn \text{ is real in } AZ(N) \quad \Longleftrightarrow \quad x \text{ is real in } A \text{ and } n \text{ is real in } Z(N).
\]
Indeed, identifying $xn$ with $(x,n) \in AZ(N)$, suppose $(x,n)$ is real. Then there exists 
$(x_1,n_1) \in AZ(N)$ such that
\[
x_1 x x_1^{-1} = x^{-1}, \qquad n_1 (x_1 n) (x^{-1} n_1^{-1}) = x^{-1} n^{-1}.
\]
Since $A$ is abelian, the first condition gives $x = x^{-1}$, hence $x^2 = e$. Moreover, because the action 
of $A$ on $Z(N)$ is trivial, the second condition reduces to
\[
n_1 n n_1^{-1} = n^{-1}.
\]
But as $n \in Z(N)$, we have $n_1 n n_1^{-1} = n$. Therefore, the above relation forces
$n = n^{-1}$, i.e. $n^2 = e.$
Since $N$ is a simply connected nilpotent Lie group, it contains no nontrivial finite-order elements. 
Hence $n = e$.
This proves $(4)$.

\qed

\begin{remark}
Let $G$ be a connected solvable Lie group of the form $A\ltimes N$ as in Theorem \ref{a class of solvable groups}, and $x\in A$ be real. The following example shows that, even if the $x$-action is trivial on $Z(N)$ but non-trivial on other subquotients $N_j/N_{j+1}$ of $N$, $xn$ need not be real for all $n \in N\setminus Z(N)$. 

Let
\[
N = \left\{
\begin{pmatrix}
1 & a & c \\
0 & 1 & b \\
0 & 0 & 1
\end{pmatrix}
: a,b,c \in \mathbb{C}
\right\}
\]
be the $3$-dimensional complex Heisenberg group, and let \(A = \mathbb{C}^\times\) act on \(N\) by
\[
\lambda \cdot
\begin{pmatrix}
1 & a & c \\
0 & 1 & b \\
0 & 0 & 1
\end{pmatrix}
=
\begin{pmatrix}
1 & \lambda a & c \\
0 & 1 & b/\lambda \\
0 & 0 & 1
\end{pmatrix},
\quad \lambda \in \mathbb{C}^\times.
\]
Set \(G = A \ltimes N\). 
Since $x\in A$ is real, we have $x^2 = 1$. 

Case 1: $x = 1$.  
Let 
\[
n = \begin{pmatrix}
1 & a & c \\
0 & 1 & b \\
0 & 0 & 1
\end{pmatrix} \notin Z(N), \quad (a,b) \neq (0,0).
\]
If $a \neq 0$, $(1,n)$ is conjugate to $(1,n)^{-1}$ by $(-1,m)$ where
\[
m =
\begin{pmatrix}
1 & 0 & 0 \\
0 & 1 & \frac{ab - 2c}{a} \\
0 & 0 & 1
\end{pmatrix}.
\]
If $b \neq 0$, we may take
\[
m =
\begin{pmatrix}
1 & \frac{2c - ab}{b} & 0 \\
0 & 1 & 0 \\
0 & 0 & 1
\end{pmatrix}.
\]

Case 2: $x = -1$.  
Let $(\lambda,k) \in G$ with
\[
k =
\begin{pmatrix}
1 & p & z \\
0 & 1 & q \\
0 & 0 & 1
\end{pmatrix}.
\]
We require
\[
k (\lambda n) (x^{-1} k^{-1}) = x^{-1} n^{-1}.
\]
The right-hand side is
\[
\begin{pmatrix}
1 & a & ab - c \\
0 & 1 & b \\
0 & 0 & 1
\end{pmatrix},
\]
and the left-hand side is
\[
\begin{pmatrix}
1 & 2p + \lambda a & 2p q + c + \lambda a q + \frac{p b}{\lambda} \\
0 & 1 & 2q + \frac{b}{\lambda} \\
0 & 0 & 1
\end{pmatrix}.
\]
Comparing entries gives
\[
p = \frac{a - \lambda a}{2}, \quad q = \frac{b - b/\lambda}{2}.
\]
Equality holds at $(1,3)$-position if $ab = 2c$, a condition independent of $\lambda$.  
Hence $(-1,n)$ is real exactly when $ab = 2c$.
\end{remark}
%%%%%%%%%%%%%%%%%%%%%%%%%%%%%%%%%%%%%%%%%%%%%%%
%%%%%%%%%%%%%%%%%%%%%%%%%%%%%%%%%%%%%%%%%%%%%%
\section{Applications}

We first recall the following classification of real elements in ${\rm SL}(2,\mathbb{R})$.

\begin{lemma}\label{lem-rev-SL}
An element of ${\rm SL}(2,\mathbb{R})$ is real if and only if it is conjugate to
\[
\begin{pmatrix}
r & 0 \\
0 & r^{-1}
\end{pmatrix}, 
\quad r \in \mathbb{R} \setminus \{0\}.
\]
Moreover, elements that conjugate 
$\begin{pmatrix} r & 0 \\ 0 & r^{-1} \end{pmatrix}$ to its inverse are precisely those of the form 
$\begin{pmatrix} 0 & t \\ -t^{-1} & 0 \end{pmatrix}$.\qed
\end{lemma}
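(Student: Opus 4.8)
The plan is to classify $g\in\mathrm{SL}(2,\mathbb{R})$ by its trace, exploiting the fact that $g$ and $g^{-1}$ always share the same characteristic polynomial $\lambda^2-\mathrm{tr}(g)\lambda+1$ (inversion merely interchanges the eigenvalues $\lambda,\lambda^{-1}$), so reality can fail only through conjugacy invariants finer than the trace. I would split into the three standard cases $|\mathrm{tr}(g)|>2$ (hyperbolic), $|\mathrm{tr}(g)|=2$ (central $\pm I$ or parabolic), and $|\mathrm{tr}(g)|<2$ (elliptic), and show that reality holds precisely in the hyperbolic and central cases, which are exactly the elements conjugate to a diagonal $\mathrm{diag}(r,r^{-1})$, $r\in\mathbb{R}\setminus\{0\}$ (equivalently, the elements diagonalizable over $\mathbb{R}$).

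For the ``if'' direction, I would note that $\mathrm{diag}(r,r^{-1})$ is conjugate to its inverse $\mathrm{diag}(r^{-1},r)$ by $w=\begin{pmatrix}0&1\\-1&0\end{pmatrix}\in\mathrm{SL}(2,\mathbb{R})$ (a one-line matrix check), and since reality is preserved under conjugation, every element conjugate to such a diagonal matrix is real. This simultaneously covers the central elements $\pm I$ (where $r=\pm1$) and the hyperbolic elements ($|r|\neq1$).

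The main work is the converse, ruling out elliptic and parabolic elements, and this is the step I expect to be the real obstacle. For an elliptic $g$ (complex eigenvalues $e^{\pm i\theta}$, $\theta\in(0,\pi)$), I would diagonalize over $\mathbb{C}$ with conjugate eigenvectors $v,\bar v$ and observe that any real $h$ with $hgh^{-1}=g^{-1}$ must swap the eigenlines $\mathbb{C}v$ and $\mathbb{C}\bar v$; writing $h$ in the basis $\{v,\bar v\}$ as an anti-diagonal matrix $\begin{pmatrix}0&\alpha\\\beta&0\end{pmatrix}$, the reality of $h$ forces $\alpha=\bar\beta$, whence $\det h=-|\beta|^2<0$, contradicting $\det h=1$. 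For a parabolic $g$ with $\mathrm{tr}(g)=2$ (the case $\mathrm{tr}(g)=-2$ reduces to this by factoring out the central $-I$), I would write $g=I+N$ with $N$ a rank-one nilpotent and use that $\mathrm{SL}(2,\mathbb{R})$ preserves the area form $\omega(u,v)=\det(u\mid v)$: the sign of the semidefinite quadratic form $x\mapsto\omega(x,Nx)$ is a conjugacy invariant, and it reverses when $N$ is replaced by $-N$, i.e.\ when passing to $g^{-1}=I-N$; hence $g\not\sim g^{-1}$.

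Finally, for the ``moreover'' statement (taken with $r\neq\pm1$, so that the eigenvalues are distinct; when $r=\pm1$ the element is central and is conjugated to itself by everything), I would use that any $h$ satisfying $h\,\mathrm{diag}(r,r^{-1})\,h^{-1}=\mathrm{diag}(r^{-1},r)$ must carry each eigenline of $\mathrm{diag}(r,r^{-1})$ to the matching eigenline of its inverse, forcing $h$ to be anti-diagonal, $h=\begin{pmatrix}0&t\\c&0\end{pmatrix}$; imposing $\det h=1$ gives $c=-t^{-1}$, which is exactly the asserted form, and a direct computation confirms that every such matrix indeed conjugates $\mathrm{diag}(r,r^{-1})$ to its inverse.
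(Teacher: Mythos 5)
Your proposal is correct, and in fact it supplies more than the paper does: the paper states Lemma~\ref{lem-rev-SL} as a known fact with no proof at all (the statement simply ends in a \qed), so there is no internal argument to compare against. Your trace trichotomy is the standard route, and the two exclusion steps are sound: the swap of the complex eigenlines forcing $\det h=-|\beta|^2<0$ correctly rules out elliptic elements, and the sign of the semidefinite form $x\mapsto\omega(x,Nx)$ is indeed an $\mathrm{SL}(2,\mathbb{R})$-conjugacy invariant of $I+N$ that changes sign under $N\mapsto -N$, which correctly rules out parabolics (with the $\mathrm{tr}=-2$ case reduced via the central $-I$). Your eigenline argument for the ``moreover'' part is also the right one, and you correctly flag the one genuine imprecision in the statement as written: for $r=\pm1$ the matrix $\mathrm{diag}(r,r^{-1})$ is central and is conjugated to its inverse by \emph{every} element, so the characterization of conjugating elements as anti-diagonal matrices only holds for $r\neq\pm1$; this is consistent with how the lemma is actually used in the proof of Theorem~\ref{them-homogeneous-rep}, where the anti-diagonal form of $\rho(y)$ is invoked only in the case $r\neq\pm1$. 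No gaps.
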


Proof of Theorem~\ref{them-homogeneous-rep}:
For $x = \begin{pmatrix} r & 0 \\ 0 & r^{-1} \end{pmatrix}$, the representation $\rho(x)$ is diagonal:
\[
\rho(x) = 
\begin{cases}
\mathrm{diag}(r^n, r^{n-2}, \dots, r^2, 1, r^{-2}, \dots, r^{-n}), & n \ \text{even},\\[2pt]
\mathrm{diag}(r^n, r^{n-2}, \dots, r, r^{-1}, \dots, r^{-n}), & n \ \text{odd}.
\end{cases}
\]

We now determine the real elements of ${\rm SL}(2,\mathbb{R}) \ltimes V_n$ by separately examining the cases when $n$ is even or odd.

 $(1)$ For even $n$:
we identify $(x,v) \in {\rm SL}(2,\mathbb{R}) \ltimes V_n$ with the matrix
\[
\begin{pmatrix}
\rho(x) & v\\
0 & 1
\end{pmatrix}.
\]

$(a)$ For $r = \pm 1$, $\rho(g) = I_{(n+1)\times (n+1)}$. The element $\begin{pmatrix} I_{(n+1)\times (n+1)} & v\\ 0 & 1 \end{pmatrix}$ is real if there exists $y \in {\rm SL}(2,\mathbb{R})$ such that $\rho(y)v = -v$. The proof of this is a straightforward computation.

 $(b)$ For $r \neq \pm 1$, we have
\[
\rho(x) = \mathrm{diag}(r^n, r^{n-2}, \dots, r^2, 1, r^{-2}, \dots, r^{-n}).
\]
Let $y = \begin{pmatrix} 0 & t \\ -t^{-1} & 0 \end{pmatrix}$, then $\rho(y)$ is an anti-diagonal matrix (from the top-right corner  to the bottom-left)  denoted by 
\[
\rho(y) =
\mathrm{antidiag}(t^n, t^{n-2}, \dots, t^2, -1, t^{-2}, \dots, t^{-n}).
\]
An element $(\rho(x),v)$ is real if there exists $w \in V_n$ satisfying
\[
-\rho(x)^{-1}w + w = -\rho(y)v - \rho(x)^{-1}v.
\]

Writing $w = (w_1,\ldots,w_{n+1})$ and $v = (v_1,\ldots,v_{n+1})$, this yields:
\begin{align*}
(-r^{-n} + 1) w_{1} &= -t^n v_{n+1} - r^{-n} v_1, \\ \vdots\\
(-r^{-2} + 1) w_{\frac{n}{2}} &= -t^2 v_{\frac{n+4}{2}} - r^{-2} v_{\frac{n}{2}},\\
0 &= v_{\frac{n+2}{2}} - v_{\frac{n+2}{2}}, \\
(-r^{2} + 1) w_{\frac{n+4}{2}} &= -t^{-2} v_{\frac{n}{2}} - r^{2} v_{\frac{n+4}{2}},\\ \vdots\\
(-r^n + 1) w_{n+1} &= -t^{-n} v_1 - r^{n} v_{n+1}.
\end{align*}
Since $r \neq \pm 1$, the system has an unique solution  for $w_1, w_2,\ldots, w_{\frac{n}{2}},  w_{\frac{n+4}{2}},\ldots,  w_{n+1}$ (upto a fixed $t$), while $w_{\frac{n+2}  {2}}$ is arbitrary. Thus $(x,v)$ is real for all $v \in V_n$. 

\medskip

$(2)$ For odd $n$:

$(a)$ if $x = I_{2\times 2}$, then $(I_{(n+1)\times (n+1)},v)$ is real if there exists $y \in {\rm SL}(2,\mathbb{R})$ such that $\rho(y)v = -v$. The proof is again a straightforward computation.

$(b)$ for $x = \begin{pmatrix} r & 0 \\ 0 & r^{-1} \end{pmatrix}$, we have
\[
\rho(x) =
\mathrm{diag}(r^n, r^{n-2}, \dots, r, r^{-1}, \dots, r^{-n}).
\]
If $x \neq I_{2\times 2}$, then $\rho(x)$ fixes no nonzero vector in $V_n$, so $(x,v)$ is real by Proposition \ref{vector group}. 

\qed

\begin{remark}
 Note that every real element in ${\rm SL}(2,\mathbb{R}) \ltimes V_n$ is necessarily rational.  
In particular, if $x \in {\rm SL}(2,\mathbb{R})$ is rational and $(x,v)$ is real, then $(x,v)$ is also rational.  
\end{remark}

We now turn to the proof of Corollary~\ref{cor-affine}, where we prove that if $x \in {\rm GL}(n,\mathbb{R})$ is rational, then $xv \in G={\rm GL}(n,\mathbb{R}) \ltimes \mathbb{R}^n$ is rational for every $v \in \mathbb{R}^n$.  
An analogous statement holds for ${\rm GL}(n,\mathbb{C}) \ltimes \mathbb{C}^n$, and we omit the details.

\vspace{.2cm}

Proof of Corollary~\ref{cor-affine}:
Note that, we can realise $G$ as $$\{\begin{pmatrix}
x & v \\[2pt]
0 & 1
\end{pmatrix}|\;x\in {\rm GL}(n,\mathbb R), v\in\mathbb R^n\}.$$
Therefore, for $x \in {\rm GL}(n,\mathbb{R})$ and $v \in \mathbb{R}^n$, we write 
 $xv\in G$ as $\begin{pmatrix}
x & v \\[2pt]
0 & 1
\end{pmatrix}.$

\textbf{Case 1:} Suppose the action of $x$ has no nonzero fixed point in $\mathbb{R}^n$.  
In this case, the corollary follows directly from Proposition \ref{vector group}.

\textbf{Case 2:} Suppose action of $x$ has a nonzero fix point in $\mathbb{R}^n$. That is, $1$ is an eigenvalue of $x$. Then we divide the proof in two sub-cases.

 $\it {Subcase \; 2.1}$ Suppose the order of $x$ is infinite. 
A straightforward computation shows that $(xv)^k\in G$ can be realised as
\[
\begin{pmatrix}
x^k & (x^{k-1} + x^{k-2} + \cdots +x+ I)v \\
0 & 1
\end{pmatrix}.
\]
Therefore, the order of $xv$ is also infinite. For an infinite order element, it follows that $x$ is rational if and only if $x$ is real. Hence, by Lemma 3.10 of \cite{GLM23} that $xv$ is real for all  $v\in\mathbb R^n$. Since $xv$ is of infinite order, we conclude that $xv$ is rational for all $v\in\mathbb R^n$.

$\it {Subcase \; 2.2}$ Suppose the order of $x$ is finite and equal to $m$, i.e., $x^m = I_{n\times n}$.  
Thus, the roots of the minimal polynomial of $x$ are distinct, being $m^{th}$ roots of unity. Without loss of generality, assume that the algebraic multiplicity of the eigenvalue $1$ is one, then the Jordan form of $x$, denoted by $\widetilde{J}(x)$, is \[
\begin{pmatrix}
    [1] & 0\\
    0 & J(x)_{(n-1)\times (n-1)}
\end{pmatrix},
\]
where $J(x)_{(n-1)\times (n-1)}$ is the diagonal block matrix whose diagonal blocks are the Jordan blocks of $x$ other than $[1]$. Clearly, if $x$ is rational, then $\widetilde{J}(x)$ is also rational.

We now show  $J(x)_{(n-1)\times (n-1)}$ is rational in ${\rm GL}(n-1,\mathbb R)$. Note that, the order of $J(x)_{(n-1)\times (n-1)}$ is $m$. 
Let $k$ be an integer with $\gcd(k,m)=1$. Since $\Tilde{J}(x)$ is rational, there exists $g=(g_{ij})_{n \times n} \in {\rm GL}(n,\mathbb R)$ such that $g\Tilde{J}(x)g^{-1}=\Tilde{J}(x)^k$, that is, $g\Tilde{J}(x)=\Tilde{J}(x)^kg$. Comparing the first column vector of both sides, we have \[ (g_{21},
    g_{31},\ldots, g_{n1})^T
= J(x)_{(n-1)\times (n-1)}^k(g_{21},
    g_{31},\ldots, g_{n1})^T,\]
Since  eigenvalue of $J(x)_{(n-1)\times (n-1)}$ is not equal 1, $(g_{21},
    g_{31},\ldots, g_{n1})^T=0$. Thus, $g$ has the form $\begin{pmatrix}
    g_{11} & *\\
    0 & g'
\end{pmatrix},$ where $g'$ is invertible. Comparing again in $g\Tilde{J}(x)=\Tilde{J}(x)^kg$, we deduce $g'J(x)_{(n-1)\times (n-1)}=J(x)_{(n-1)\times (n-1)}^kg'$. Hence $J(x)_{(n-1)\times (n-1)}$ is rational in ${\rm GL}(n-1,\mathbb R)$.

Now, we analyze the rationality of the elements $\begin{pmatrix}
    \Tilde{J}(x) & v\\
    0 & 1
\end{pmatrix}$ using the rationality of the element $J(x)_{(n-1)\times (n-1)}$.  For any $l\geq 2$, \[
\begin{pmatrix}
    \Tilde{J}(x) & v\\
    0 & 1
\end{pmatrix}^l =
\begin{pmatrix}
\Tilde{J}(x)^l & (\Tilde{J}(x)^{l-1} + \Tilde{J}(x)^{l-2} + \cdots + I)v \\
0 & 1
\end{pmatrix}.
\]
Let $v=(v_1, v_2, \ldots, v_n)^T$ and assume $v_1\neq 0$. Then, the first entry of the vector $(\Tilde{J}(x)^{l-1} + \Tilde{J}(x)^{l-2} + \cdots + I)v$ is equal to the non-zero number $lv_1$. Therefore, the order of $\begin{pmatrix}
    \Tilde{J}(x) & v\\
    0 & 1
\end{pmatrix}$ is infinite. By Lemma 3.10 of \cite{GLM23}, it is real, and hence rational. 

This also implies that if $\begin{pmatrix}
    \Tilde{J}(x) & v\\
    0 & 1
\end{pmatrix}$ has finite order, then $v_1=0$. In this case, consider the matrix $J(x)_{(n-1)\times(n-1)}$ and vector $v'=(v_2,v_3,\ldots, v_n)^T \in \mathbb{R}^{n-1}$. Since $J(x)_{(n-1)\times(n-1)}$ is rational in ${\rm GL}(n-1,\mathbb R)$ and none of its eigenvalues equal 1, the action of $J(x)_{(n-1)\times(n-1)}$ has no nonzero fix point in $\mathbb{R}^{n-1}$. Therefore, $\begin{pmatrix}
    J(x)_{(n-1)\times(n-1)} & v'\\
    0 & 1
\end{pmatrix}$ is rational, by Proposition \ref{vector group}.

Moreover,
$\begin{pmatrix}
    J(x)_{(n-1)\times(n-1)} & v'\\
    0 & 1
\end{pmatrix}_{n\times n}$ and 
$\begin{pmatrix}
    \widetilde{J}(x) & v\\
    0 & 1
\end{pmatrix}_{(n+1)\times (n+1)}$
have the same order (say $m_1$). If $k_1$ is an integer with $\gcd(k_1,m_1)=1$, the rationality of $\begin{pmatrix}
    J(x)_{(n-1)\times(n-1)} & v'\\
    0 & 1
\end{pmatrix}$ ensures the existence of a $(n-1)\times (n-1)$ matrix $P'$ and vector $w'=(w_2, w_3,\ldots, w_n)^T$ such that $$\begin{pmatrix}
    P' & w'\\
    0 & 1
\end{pmatrix}\begin{pmatrix}
    J(x)_{(n-1)\times(n-1)} & v'\\
    0 & 1
\end{pmatrix}\begin{pmatrix}
    P' & w'\\
    0 & 1
\end{pmatrix}^{-1}=\begin{pmatrix}
    J(x)_{(n-1)\times(n-1)} & v'\\
    0 & 1
\end{pmatrix}^{k_1}.$$ Now, set \[P=\begin{pmatrix}
    1 &0\\
    0 &P'
\end{pmatrix} \text{ and } w=\begin{pmatrix}
    0\\
    w'
\end{pmatrix}.\] A simple calculation shows that, \[ \begin{pmatrix}
    P & w\\
    0 & 1
\end{pmatrix}\begin{pmatrix}
    \Tilde{J}(x) & v\\
    0 & 1
\end{pmatrix}\begin{pmatrix}
    P & w\\
    0 & 1
\end{pmatrix}^{-1}=\begin{pmatrix}
    \Tilde{J}(x) & v\\
    0 & 1
\end{pmatrix}^{k_1}.\]

This completes the proof. \qed

\vspace{.2cm}

\noindent
{\bf Acknowledgements:}
The first-named author gratefully acknowledges S. G. Dani for a helpful comment on an earlier version, and the authors thank Arindam Jana for some discussions.
The second author acknowledges support from the NBHM Postdoctoral Fellowship (PDF no.\ 0204/27/(29)/2023/R\&D-II/11930).

\vspace{.2cm}
\noindent
{\bf Conflict of interest:}
On behalf of all authors, the corresponding author states that there is no conflict of interest.

 \bibliography{References.bib}
\bibliographystyle{amsalpha}
\end{document}